%
%
%
\documentclass{conm-p-l}

\newtheorem{theorem}{Theorem}[section]
\newtheorem{thm}[theorem]{Theorem} 
 \newtheorem{prop}[theorem]{Proposition}
 \newtheorem{lem}[theorem]{Lemma}
 \newtheorem{cor}[theorem]{Corollary}

\theoremstyle{definition}

\theoremstyle{remark}

 \newtheorem*{rmk}{Remark}

\numberwithin{equation}{section}



\newcommand{\Lie}{\text {\rm Lie }}
\newcommand{\pos}{\text {\rm pos}}

\newcommand{\Ad}{\text {\rm Ad}}

\newcommand{\supp}{\text {\rm supp}}

\newcommand{\kk}{\mathbf k}

\newcommand{\cb}{\mathcal B}

\newcommand{\co}{\mathcal O}
\newcommand{\cp}{\mathcal P}

\newcommand{\ct}{\mathcal T}

\newcommand{\FF}{\mathbf F}

\def\a{\alpha}
\def\b{\beta}

\def\d{\delta}
\def\D{\Delta}

\def\v{\vartheta}

\def\i{^{-1}}

\begin{document}

\title{$G$-stable pieces and partial flag varieties}

\author{Xuhua He}
\address{Department of Mathematics, Stony Brook University, Stony Brook, NY 11794}
\email{hugo@math.sunysb.edu}
\thanks{The author is partially supported by NSF grant DMS-0700589}

\subjclass[2000]{14M15, 20G40}

\begin{abstract}
We will use the combinatorics of the $G$-stable pieces to describe the closure relation of the partition of partial flag varieties in \cite[section 4]{L3}.
\end{abstract}
\maketitle

\section*{Introduction}

In 1977, Lusztig introduced a finite partition of a (partial) flag variety $Y$. In the case where $Y$ is the full flag variety, this partition is the partition into Deligne-Lusztig varieties (see \cite{DL}). In this case, it follows easily from the Bruhat decomposition that the closure of a Deligne-Lusztig variety is the union of some other Deligne-Lusztig varieties and the closure relation is given by the Bruhat order on the Weyl group.  

In this paper, we will use some combinatorial technique in \cite{H4} to study the partition on a partial flag variety. We show that the partition is a stratification and the closure relation is given by the partial order introduced in \cite[5.4]{H2} and \cite[3.8 \& 3.9]{H3}. We also study some other properties of the locally closed subvarieties that appear in the partition. 

\section{Some combinatorics}

\subsection{} Let $\kk$ be an algebraic closure of the finite field $\FF_q$ and $G$ be a connected reductive algebraic group defined over $\FF_q$ with Frobenius map $F: G \to G$. We fix an $F$-stable Borel subgroup $B$ of $G$ and an $F$-stable maximal torus $T \subset B$. Let $I$ be the set of simple roots determined by $B$ and $T$. Then $F$ induces an automorphism on the Weyl group $W$ which we deonte by $\d$. The autmorphism restricts to a bijection on the set $I$ of simple roots. By abusion notations, we also denote the bijection by $\d$.

For any $J \subset I$, let $P_J$ be the standard parabolic subgroup corresponding to $J$ and $\cp_J$ be the set of parabolic subgroups that are $G$-conjugate to $P_J$. We simply write $\cp_{\emptyset}$ as $\cb$. Let $L_J$ be the Levi subgroup of $P_J$ that contains $T$.

For any parabolic subgroup $P$, let $U_P$ be the unipotent radical of $P$. We simply write $U$ for $U_B$.

For $J \subset I$, we denote by $W_J$ the standard parabolic subgroup of $W$ generated by $J$ and by $W^J$ (resp. ${}^J W$) the set of minimal coset representatives in $W/W_J$ (resp. $W_J \backslash W$). For $J, K \subset I$, we simply write $W^J \cap {}^K W$ as ${}^K W^J$.

For $P \in \cp_J$ and $Q \in \cp_K$, we write $\pos(P, Q)=w$ if $w \in {}^J W^K$ and there exists $g \in G$ such that $P=g P_J g \i$, $Q=g \dot w P_K \dot w \i g \i$, where $\dot w$ is a representative of $w$ in $N(T)$.

For $g \in G$ and $H \subset G$, we write $^g H$ for $g H g \i$.

\

We first recall some combinatorial results.

\subsection{} For $J \subset I$, let $\ct(J, \d)$ be the set of sequences $(J_n, w_n)_{n \ge 0}$ such that

(a) $J_0=J$,

(b) $J_n=J_{n-1} \cap \Ad(w_{n-1}) \d(J_{n-1})$ for $n \ge 1$,

(c) $w_n \in {}^{J_n} W^{\d(J_n)}$ for $n \ge 0$,

(d) $w_n \in W_{J_n} w_{n-1} W_{\d(J_{n-1})}$ for $n \ge 1$.

Then for any sequence $(J_n, w_n)_{n \ge 0} \in \ct(J, \d)$, we have that $w_n=w_{n+1}=\cdots$ and $J_n=J_{n+1}=\cdots$ for $n \gg 0$. By \cite{Be}, the assignment $(J_n, w_n)_{n \ge 0} \mapsto w_m \i$ for $m \gg 0$ defines a bijection $\ct(J, \d) \to W^J$.

Now we prove some result that will be used in the proof of Lemma 2.5.

\begin{lem}
Let $(J_n, w_n)_{n \ge 0} \in \ct(J, \d)$ be the element that corresponds to $w$. Then

(1) $w (L_J \cap U_{P_{J_1}}) w \i \subset U_{P_{\d(J)}}$.

(2) $w (L_{J_i} \cap U_{P_{J_{i+1}}}) w \i \subset L_{\d(J_{i-1})} \cap U_{P_{\d(J_i)}}$ for $i \ge 1$.
\end{lem}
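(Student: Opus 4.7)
The plan is to pass to the level of root subgroups, reduce both parts of the lemma to a single combinatorial identity about the action of $w$ on sub-root systems, and prove that identity by induction on the level $m$ in the sequence using Howlett's double coset theorem.

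Write $\Phi_J$ for the root system of $L_J$. Then $L_J \cap U_{P_{J_1}}$ is the unipotent radical inside $L_J$ of the standard parabolic $L_J \cap P_{J_1}$, and so it is directly spanned by the root subgroups $U_\alpha$ with $\alpha \in \Phi_J^+ \setminus \Phi_{J_1}$; analogous product descriptions hold for $U_{P_{\d(J)}}$, $L_{J_i} \cap U_{P_{J_{i+1}}}$, and $L_{\d(J_{i-1})} \cap U_{P_{\d(J_i)}}$. Since $w U_\alpha w^{-1} = U_{w(\alpha)}$, the lemma becomes the two inclusions
\[
w(\Phi_J^+ \setminus \Phi_{J_1}) \subset \Phi^+ \setminus \Phi_{\d(J)} \quad \text{and} \quad w(\Phi_{J_i}^+ \setminus \Phi_{J_{i+1}}) \subset \Phi_{\d(J_{i-1})}^+ \setminus \Phi_{\d(J_i)},
\]
the second for $i \ge 1$. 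Both will follow from the single identity
\[
w^{-1}(\Phi_{\d(J_i)}) \cap \Phi_{J_i} = \Phi_{J_{i+1}} \qquad (i \ge 0). \qquad (\star)
\]
Indeed, avoidance of $\Phi_{\d(J_i)}$ is immediate from $(\star)$: if $\alpha \in \Phi_{J_i} \setminus \Phi_{J_{i+1}}$ then $\alpha \notin w^{-1}(\Phi_{\d(J_i)})$, so $w(\alpha) \notin \Phi_{\d(J_i)}$. Applying $(\star)$ at level $i-1$ gives $\Phi_{J_i} \subset w^{-1}(\Phi_{\d(J_{i-1})})$, i.e., $w(\Phi_{J_i}) \subset \Phi_{\d(J_{i-1})}$; combined with $w \in W^J$ (so $w(\Phi_J^+) \subset \Phi^+$, and in particular $w(\Phi_{J_i}^+) \subset \Phi^+$), this yields $w(\Phi_{J_i}^+) \subset \Phi_{\d(J_{i-1})}^+$. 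Positivity in the first inclusion is just $w \in W^J$.

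To prove $(\star)$, use that $w^{-1} = w_m$ for $m \gg 0$ and show by induction that $w_m(\Phi_{\d(J_i)}) \cap \Phi_{J_i} = \Phi_{J_{i+1}}$ for every $m \ge i$. The base case $m = i$ is Howlett's theorem applied to $w_i \in {}^{J_i} W^{\d(J_i)}$: the intersection $W_{J_i} \cap w_i W_{\d(J_i)} w_i^{-1}$ is the standard parabolic subgroup $W_{J_i \cap \Ad(w_i)(\d(J_i))}$, which equals $W_{J_{i+1}}$ by condition (b); passing to root systems gives the claim. For the inductive step from $m$ to $m+1$, use condition (d) to write $w_{m+1} = a w_m b$ with $a \in W_{J_{m+1}} \subset W_{J_{i+1}}$ and $b \in W_{\d(J_m)} \subset W_{\d(J_i)}$, the inclusions coming from the nested chains $J_{m+1} \subset J_{i+1}$ and $J_m \subset J_i$ (valid for $m \ge i$). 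Then $b$ preserves $\Phi_{\d(J_i)}$ setwise and $a$ preserves both $\Phi_{J_i}$ and $\Phi_{J_{i+1}}$ setwise, giving
\[
w_{m+1}(\Phi_{\d(J_i)}) \cap \Phi_{J_i} = a\bigl(w_m(\Phi_{\d(J_i)}) \cap \Phi_{J_i}\bigr) = a(\Phi_{J_{i+1}}) = \Phi_{J_{i+1}}.
\]

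The principal technical input is the base case, which rests on Howlett's theorem that $W_J \cap w W_K w^{-1}$ is the standard parabolic $W_{J \cap w(K)}$ whenever $w \in {}^J W^K$. Once this is in hand, the inductive transport from $w_i$ to $w_m = w^{-1}$ is routine bookkeeping with the nesting coming from the definition of $\ct(J, \d)$, and the reduction back to the root subgroup inclusions of the lemma is a direct check.
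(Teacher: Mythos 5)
Your proof is correct. The reduction to root subgroups is valid (all the groups involved are directly spanned by root subgroups, and conjugation by a representative of $w$ sends $U_\alpha$ to $U_{w\alpha}$), the identity $(\star)$ does follow from Kilmoyer--Howlett's theorem $W_{J_i}\cap w_i W_{\d(J_i)}w_i^{-1}=W_{J_i\cap \Ad(w_i)\d(J_i)}=W_{J_{i+1}}$ together with the transport along condition (d) using the nesting $J_{m+1}\subset J_{i+1}$, $J_m\subset J_i$, and the deduction of both inclusions from $(\star)$ (positivity from $w\in W^J$, membership in $\Phi_{\d(J_{i-1})}$ from $(\star)$ at level $i-1$, avoidance of $\Phi_{\d(J_i)}$ from $(\star)$ at level $i$) is sound. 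Your route is, however, genuinely different from the paper's. The paper argues part (1) by contradiction entirely with simple roots: if some $\alpha\in\Phi_J^+\setminus\Phi_{J_1}$ had $w\alpha\in\Phi^+_{\d(J)}$, one extracts a simple root $\alpha_i\le\alpha$ with $i\in J\setminus J_1$, observes $w\alpha_i\in\Phi^+_{\d(J)}$, and then uses only that $w^{-1}$ lies in $w_1 W_{\d(J)}$ with $w_1\in W^{\d(J)}$ to force $\alpha_i=w_1\alpha_j$ for some $j\in\d(J)$, whence $i\in J_1$ --- no appeal to Howlett's theorem and no induction along the sequence $(J_n,w_n)$. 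Your approach buys a cleaner, uniform statement: the single identity $w^{-1}(\Phi_{\d(J_i)})\cap\Phi_{J_i}=\Phi_{J_{i+1}}$ delivers both parts at once (the paper only writes out part (1) and asserts part (2) is "proved in the same way"), at the cost of invoking the double-coset intersection theorem as an external input and of the bookkeeping induction from $w_i$ up to $w_m=w^{-1}$. The paper's argument is shorter and more self-contained but proves only the containment it needs rather than the equality.
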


\begin{proof} We only prove part (1). Part (2) can be proved in the same way.

Assume that part (1) is not true. Then there exists $\a \in \Phi^+_J-\Phi^+_{J_1}$ such that $w a \in \Phi^+_{\d(J)}$. Let $i \in J-J_1$ with $\a_i \le \a$. Since $w \in W^J$, we have that $w \a_i \in \Phi^+_{\d(J)}$. By definition, $w \i=w_1 v$ for some $v \in W_{\d(J}$. Then $\a_i \in w \i \Phi^+_{\d(J)}=w_1 v \Phi^+_{\d(J)}=w_1 \Phi_{\d(J)}$. Since $w_1 \in W^{\d(J)}$, we must have $\a_i=w_1 \a_j$ for some $j \in \d(J)$. Hence $i \in J_1$, which is a contradiction. Part (1) is proved.
\end{proof}

\subsection{} Define a $W_J$-action on $W$ by $x \cdot y=\d(x) y x \i$. For $w \in W^J$, set $$I(J, \d; w)=\max\{K \subset J; \Ad(w)(K)=\d(K)\}$$ and $[w]_J=W_J \cdot (w W_{I(J, \d; w)})$. Then $W=\sqcup_{w \in W^J} [w]_J$. See \cite[Corollary 2.6]{H4}.

Given $w, w' \in W$ and $j \in J$, we write $w \xrightarrow{s_j}_{\d} w'$ if $w'=s_{\d(j)} w s_j$ and $l(w') \le l(w)$. If $w=w_0, w_1, \cdots, w_n=w'$ is a sequence of elements in
$W$ such that for all $k$, we have $w_{k-1} \xrightarrow{s_j}_{\d} w_k$ for some $j \in J$, then we write $w \rightarrow_{J, \d} w'$.

We call $w, w' \in W$ {\it elementarily strongly $(J, \d)$-conjugate} if $l(w)=l(w')$ and there exists $x \in W_J$ such that $w'=\d(x) w x \i$ and either $l(\d(x) w)=l(x)+l(w)$ or $l(w x \i)=l(x)+l(w)$. We call $w, w'$ {\it strongly $(J, \d)$-conjugate} if there is a sequence $w=w_0, w_1, \cdots, w_n=w'$ such that $w_{i-1}$ is elementarily strongly $(J, \d)$-conjugate to $w_i$ for all $i$. We will write $w \sim_{J, \d} w'$ if $w$ and $w'$ are strongly $(J, \d)$-conjugate. If $w \sim_{J, \d} w'$ and $w \to_{J, \d} w'$, then we say that $w$ and $w'$ are in the same $(J, \d)$-cyclic shift and write $w \approx_{J, \d} w'$. Then it is easy to see that $w \approx_{J, \d} w'$ if and only if $w \to_{J, \d} w'$ and $w' \to_{J, \d} w$.

By \cite[Proposition 3.4]{H4}, we have the following properties:

(a) for any $w \in W$, there exists $w_1 \in W^J$ and $v \in W_{I(J, \d; w_1)}$ such that $w \to_{J, \d} w_1 v$.

(b) if $w, w'$ are in the same $W_J$-orbit $\co$ of $W$ and $w, w'$ are of minimal length in $\co$, then $w \sim_{J, \d} w'$. If moreover, $\co \cap W^J \neq \emptyset$, then $w \approx_{J, \d} w'$.

\subsection{} By \cite[Corollary 4.5]{H4}, for any $W_J$-orbit $\co$ and $v \in \co$, the following conditions are equivalent:

(1) $v$ is a minimal element in $\co$ with respect to the restriction to $\co$ of the Bruhat order on $W$.

(2) $v$ is an element of minimal length in $\co$.

We denote by $\co_{\min}$ the set of elements in $\co$ satisfy the above conditions. The elements in $(W_J \cdot w)_{\min}$ for some $w \in W^J$ are called {\it distinguished elements} (with respect to $J$ and $\d$).

As in \cite[4.7]{H4}, we have a natural partial order $\le_{J, \d}$ on $W^J$ defined as follows:

Let $w, w' \in W^J$. Then $w \le_{J, \d} w'$ if for some (or equivalently, any) $v' \in (W_J \cdot w')_{\min}$, there exists $v \in (W_J \cdot w)_{\min}$ such that $v \le v'$.

In general, for $w \in W^J$ and $w' \in W$, we write $w \le_{J, \d} w'$ if there exists $v \in (W_J \cdot w)_{\min}$ such that $v \le w'$.

\section{$G_F$-stable pieces}

\subsection{} For $J \subset I$, set $Z_{J}=\{(P, g U_P); P \in \cp_J, g \in G\}$ with the $G \times G$-action defined by $$(g_1, g_2) \cdot (P,  g U_P)=({}^{g_2} P, g_1 g U_P g_2 \i).$$

Set $h_{J}=(P_J, U_{P_J})$. Then the isotropic subgroup $R_{J}$ of $h_{J}$ is $\{(l u_1, l u_2); l \in L_J, u_1, u_2 \in U_{P_J}\}$. It is easy to see that $$Z_{J} \cong (G \times G)/R_{J}.$$

Set $G_F=\{(g, F(g)); g \in G\} \subset G \times G$. For $w \in W^J$, set $$Z_{J, F; w}=G_F (B, B w B) \cdot h_{J}.$$ We call $Z_{J, F; w}$ a $G_F$-stable piece of $Z_{J}$.

\begin{lem}
Let $w, w' \in W$.

(1) If $w \to_{J, \d} w'$, then $$G_F (B, B w B) \cdot h_{J} \subset G_F (B, B w' B) \cdot h_{J} \cup \cup_{v<w} G_F (B, B v B) \cdot h_{J}.$$

(2) If $w \approx_{J, \d} w'$, then $$G_F (B, B w B) \cdot h_{J}=G_F (B, B w' B) \cdot h_{J}.$$
\end{lem}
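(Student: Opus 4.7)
By induction on the length of the defining sequence, it suffices to treat a single elementary move $w' = s_{\d(j)} w s_j$ for some $j \in J$ with $l(w') \leq l(w)$ (with equality for part (2)). The key algebraic inputs are: $\dot s_j \in L_J$, so $(\dot s_j, \dot s_j) \in R_J$ stabilizes $h_J$; and with representatives chosen compatibly with $F$ so that $F(\dot s_j) = \dot s_{\d(j)}$, one has $(\dot s_j^{-1}, \dot s_{\d(j)}^{-1}) \in G_F$. Absorbing $(\dot s_j, \dot s_j)$ into $R_J$ on the right and $(\dot s_j^{-1}, \dot s_{\d(j)}^{-1})$ into $G_F$ on the left, and then using the free left action of $G_F$ on the first coordinate to normalize it back to $B$, every point of $G_F(B, BwB) h_J$ acquires a representative $(g_1, g_2) h_J$ with $g_1 \in B$ and $g_2 \in \dot s_{\d(j)}^{-1} B\dot w B \dot s_j$. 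Hence
\[
G_F(B, BwB) h_J \subset G_F\bigl(B, (B\dot s_{\d(j)}B)(B\dot w B)(B\dot s_j B)\bigr) h_J.
\]

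A case analysis of the right-hand product, using the standard rule $(BsB)(BwB) = BswB$ or $BswB \cup BwB$, shows the product is a union of Bruhat cells $BvB$ with $v \in \{w', w, s_{\d(j)}w, w s_j\}$. When $l(w') = l(w)-2$, all such $v$ satisfy $v \leq w$ and the inclusion of part~(1) is immediate. In the iso-length subcase $l(w') = l(w)$, an apparent "bigger" cell $BvB$ with $l(v) = l(w)+1$ (namely $v = s_{\d(j)}w$ or $ws_j$) may be forced to appear in the crude product bound. To handle it, I apply the same single-step manipulation with $v$ in place of $w$: the element $s_{\d(j)} v s_j$ then has length $l(w)-1 < l(w)$, so $v \to_{J,\d} s_{\d(j)}vs_j$ via a length-dropping step, and using an outer induction on $l(w)$ the piece $G_F(B, BvB) h_J$ collapses into pieces indexed by elements strictly below $w$ in Bruhat order. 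This yields part~(1).

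Part~(2) then follows from applying part~(1) to both $w \to_{J,\d} w'$ and $w' \to_{J,\d} w$. In the iso-length single step, a closer inspection of the above analysis shows that the $BvB$ with $v<w$ do not arise from $\dot s_{\d(j)}^{-1} B\dot w B \dot s_j$ itself (they only appear through the collapse of the bigger cell, which in the iso-length case feeds back into $Bw'B$); combined with the symmetric inclusion, this gives equality.

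\textbf{Main obstacle.} The principal technical hurdle is controlling the "bigger" Bruhat cell of length $l(w)+1$ that emerges from the naive product bound in the iso-length subcase. The collapse argument reduces its piece back to strictly smaller length via the same identity, but one must set up the outer induction on $l(w)$ carefully to avoid a circular appeal, and verify at the refined level (using SL$_2$-type identities for $u_\a(x) \dot s_j = \dot s_j u_{-\a}(\pm x)$ and Chevalley commutators) that no spurious cells of length $>l(w)$ remain after the iteration.
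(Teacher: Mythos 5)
Your core mechanism is the right one and is the same as the paper's: absorb $(\dot s_j,\dot s_j)\in R_J$ into the stabilizer of $h_J$ and transfer $\dot s_j^{-1}$ across $G_F$ as $\dot s_{\d(j)}^{-1}=F(\dot s_j)^{-1}$. But the way you then organize the Bruhat-cell computation has a genuine gap. Your two-sided bound $G_F(B,BwB)\cdot h_J\subset G_F(B,(Bs_{\d(j)}B)(BwB)(Bs_jB))\cdot h_J$ is too crude: in the iso-length subcase (say $ws_j<w$ and $s_{\d(j)}w>w$) it produces the cell $Bs_{\d(j)}wB$ of length $l(w)+1$, and your proposed repair does not eliminate it. First, the "outer induction on $l(w)$" runs the wrong way: you need the single-step statement for an element of length $l(w)+1$ to prove it for $w$, which is circular. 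Second, even granting that statement for $v=s_{\d(j)}w$, it only yields $G_F(B,BvB)\cdot h_J\subset G_F(B,B(s_{\d(j)}vs_j)B)\cdot h_J\cup\bigcup_{u<v}G_F(B,BuB)\cdot h_J$, and the union over $u<v$ contains elements of length $l(w)$ (including $w$ itself) that are not $\le w$ and are not $w'$, so part (1) does not follow. Third, re-running your crude product on $v$ regenerates $BvB$ (since $s_{\d(j)}v=w<v$ forces $(Bs_{\d(j)}B)(BvB)\supset BvB$), so the piece does not "collapse."

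The fix — and what the paper actually does — is to multiply on one side at a time, choosing the side on which the length drops. If $ws_j<w$, write $(B,BwB)\cdot h_J=(Bs_jB,Bws_jB)\cdot h_J$ and transfer the first coordinate across $G_F$ to get $G_F(B,Bs_{\d(j)}B\cdot Bws_jB)\cdot h_J$; since $l(ws_j)=l(w)-1$, the product $Bs_{\d(j)}B\cdot Bws_jB$ is either exactly $Bw'B$ (when $l(w')=l(w)$) or $Bw'B\cup Bws_jB$ with both of length $<l(w)$ (when $l(w')=l(w)-2$). No cell of length $>l(w)$ ever appears, part (1) is immediate, and part (2) follows because in the iso-length case the product is a single cell, so one gets the equality $G_F(B,BwB)\cdot h_J=G_F(B,Bw'B)\cdot h_J$ directly at the single step rather than by trying to cancel the spurious $\bigcup_{v<w}$ terms from two opposite inclusions, which your sketch of part (2) leaves unjustified. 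The case $s_{\d(j)}w<w$ is symmetric, and the remaining case $ws_j>w$, $s_{\d(j)}w>w$ forces $w'=w$.
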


\begin{proof} It suffices to prove the case where $w \xrightarrow{s_j}_{\d} w'$ for some $j \in J$.

Notice that $F(B s_i B)=B s_{\d(i)} B$ for $i \in I$.

If $w s_j<w$, then \begin{align*} (B, B w B) \cdot h_{J} &=(B, B w s_j B) (B, B s_{j} B) \cdot h_{J} \\ &=(B s_j B, B w s_j B) \cdot h_{J} \\ & \subset G_F (B, B s_{\d (j)} B w s_j B, B) \cdot h_{J} \\ & \subset G_F (B, B w' B) \cdot h_{J} \cup G_F (B, B w s_j B) \cdot h_J.
\end{align*}

If moreover, $l(w')=l(w)$, then $B s_{\d (j)}  B w s_j B=B w' B$ and $G_F (B, B w B) \cdot h_{J}=G_F (B, B w' B) \cdot h_{J}$.

If $s_{\d(j)} w<w$, then \begin{align*}
(B, B w B) \cdot h_{J} &=(B, B s_{\d (j)} B) (B, B s_{\d (j)} w B) \cdot h_{J} \\ & \subset G_F (B s_j B, B s_{\d (j)} w B) \cdot h_{J} \\ & =G_F (B, B s_{\d (j)} w B s_j B) \cdot h_{J} \\ & \subset G_F (B, B w' B) \cdot h_{J} \cup G_F (B, B s_{\d (j)} w B) \cdot h_{J}.
                   \end{align*}

If moreover, $l(w')=l(w)$, then $B s_{\d (j)}  w B s_j B=B w' B$ and $G_F (B, B w B) \cdot h_{J}=G_F (B, B w' B) \cdot h_{J}$.

If $w s_{\d(j)}>w$ and $s_j w>w$, then $l(w')=l(w)$. By \cite[Proposition 1.10]{L1}, $w'=w$. The statements automatically hold in this case.
\end{proof}

\begin{lem}\label{7}
We have that $Z_{J}=\cup_{w \in W^J} Z_{J, F; w}$.
\end{lem}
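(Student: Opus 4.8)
The plan is to show that every point of $Z_J$ lies in some $Z_{J, F; w}$ with $w \in W^J$; the reverse inclusion is immediate since each $Z_{J, F; w} \subset Z_J$ by definition. Using the identification $Z_J \cong (G \times G)/R_J$, an arbitrary point is of the form $(g_1, g_2) \cdot h_J$. First I would use the Bruhat decomposition on the second factor: writing $g_2 \in B \dot x B$ for some $x \in W$, and absorbing the $U_{P_J}$-part on the right together with a suitable Weyl group adjustment, one reduces to showing $(g_1, \dot x) \cdot h_J$ lies in some $Z_{J, F; w}$. Then, using the freedom to translate $g_1$ on the left by $G_F$ (i.e. replacing $(g_1, g_2)$ by $(h g_1, F(h) g_2)$ for $h \in G$) together with the Bruhat decomposition of $g_1$ itself, one can bring the first coordinate into $B$, so that the point becomes $G_F (B, B \dot x B) \cdot h_J$ for some $x \in W$.

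Next I would reduce from an arbitrary $x \in W$ to an element of $W^J$. This is exactly what part (1) of Lemma 2.3 (together with the combinatorics of $\to_{J,\d}$) is designed for: by property (a) of \S1.3, for any $x \in W$ there exist $x_1 \in W^J$ and $v \in W_{I(J,\d;x_1)}$ with $x \to_{J,\d} x_1 v$. Applying Lemma 2.3(1) repeatedly along such a chain, $G_F (B, B \dot x B) \cdot h_J$ is contained in $G_F (B, B \dot{x_1 v} B) \cdot h_J \cup \bigcup_{y < x} G_F (B, B \dot y B) \cdot h_J$. Since $x_1 v$ has a representative in $W_J x_1 W_{\d(J)}$ and $v \in W_{I(J,\d;x_1)} \subset W_J$, one checks that $B \dot{x_1 v} B \cdot h_J$ and $B \dot{x_1} B \cdot h_J$ give the same $G_F$-orbit after absorbing $v$ into the $L_J U_{P_J}$-stabilizer of $h_J$; more directly, $Z_{J,F; x_1 v} = Z_{J,F; x_1}$ once one observes $\dot v \in L_J$ acts trivially on $h_J$ up to the $R_J$-action. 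Hence the point lies in $Z_{J, F; x_1} \cup \bigcup_{y<x} G_F(B, B\dot y B)\cdot h_J$, and by downward induction on $l(x)$ (the base case $x = e$ being immediate, as $e \in W^J$) we conclude $(g_1,g_2)\cdot h_J \in \bigcup_{w \in W^J} Z_{J,F;w}$.

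The main obstacle I anticipate is the bookkeeping in the first reduction step — specifically, verifying that after using the two Bruhat decompositions one genuinely lands on $G_F(B, B\dot x B)\cdot h_J$ and not on something involving an uncontrolled element of $U_{P_J}$ or $L_J$ on the right. Here one must use that $h_J = (P_J, U_{P_J})$ has stabilizer $R_J = \{(lu_1, lu_2)\}$, so any $L_J$- or $U_{P_J}$-contribution in the second coordinate can be moved into the first coordinate and then killed by the $B$ on the left; the compatibility with the $F$-twist in $G_F$ is what forces the identity $F(B\dot s_i B) = B\dot s_{\d(i)} B$ used in Lemma 2.3, and the same identity is what makes the $G_F$-translations interact correctly with Bruhat cells. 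Once that is pinned down, the induction via Lemma 2.3(1) is routine.
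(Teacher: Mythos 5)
Your overall architecture (reduce to $G_F(B,B\dot xB)\cdot h_J$ via transitivity and Bruhat, then push $x$ down to $x_1v$ with $x_1\in W^J$, $v\in W_{I(J,\d;x_1)}$ using 1.3(a) and the cyclic-shift lemma, then induct on length) matches the paper. But the step where you dispose of $v$ contains a genuine gap, and your stated justification for it is false. You claim $Z_{J,F;x_1v}=Z_{J,F;x_1}$ because ``$\dot v\in L_J$ acts trivially on $h_J$ up to the $R_J$-action.'' It does not: the stabilizer $R_J$ contains $(\dot v,\dot v)$ but not $(1,\dot v)$, so absorbing $\dot v$ from the second coordinate via $R_J$ deposits $\dot v^{-1}$ in the first coordinate, and clearing that with the only translations available, namely $(\dot v,F(\dot v))\in G_F$, reintroduces $F(\dot v)$ on the \emph{left} of $B\dot x_1B$ in the second coordinate. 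There is no formal cancellation; what you actually face is an $F$-twisted conjugation problem on a Levi subgroup. (A sanity check: for $J=I$ the claim ``every $G_F(B,BvB)\cdot h_I$ equals $G_F(B,B)\cdot h_I$'' amounts to $G$ being a single $F$-twisted conjugacy class, which is Lang's theorem and certainly not a formal consequence of stabilizer bookkeeping --- it fails for $F=\mathrm{id}$.)

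The paper closes exactly this gap as follows. After reducing to $z\in G_F(1,g'l)\cdot h_J$ with $g'\in U_{P_{\d(K)}}\dot wU_{P_K}$ and $l\in L_K$, where $K=I(J,\d;w)$, one uses that $\Ad(w)(K)=\d(K)$ (this is why one must first shrink $v$ into $W_{I(J,\d;w)}$ rather than leave it in $W_J$) to see that $F'=\Ad(\dot w^{-1})\circ F$ is a Frobenius endomorphism of $L_K$; Lang's theorem for $F'$ then produces $l_1\in L_K$ with $F'(l_1)\,l\,l_1^{-1}=1$, and acting by $(l_1,F(l_1))\in G_F$ kills the Levi part, landing $z$ in $(1,B\dot wB)\cdot h_J$. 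This application of Lang's theorem is the one non-combinatorial ingredient of the proof, and it is precisely the ingredient your argument is missing.
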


\begin{rmk}
We will see in subsection 2.3 that $Z_J$ is the disjoint union of $Z_{J, F;w}$ for $w \in W^J$.
\end{rmk}

\begin{proof} Let $z \in Z_{J}$. Since $G \times G$ acts transitively on $Z_{J}$, $z$ is contained in the $G$-orbit of an element $(1, g) \cdot h_{J}$ for some $g \in G$. By the Bruhat decomposition of $G$, we have that $z \in G_F (1, B w_1 B) \cdot h_{J}$ for some $w_1 \in W$. We may assume furthermore that $w_1$ is of minimal length among all the Weyl group elements $w'_1$ with $z \in G_F (1, B w'_1 B) \cdot h_{J}$.

By part (1) of the previous lemma and 1.3 (a), $$z \in G_F (B, B w v B) \cdot h_J \cup \cup_{l(w')<l(w_1)} G_F (B, B w' B) \cdot h_{J}$$ for some $w \in W^J$ and $v \in W_{I(J, \d; w)}$. By our assumption on $w_1$, we have that $z \in G_F (B, B w v B) \cdot h_{J}$ and $l(w v)=l(w_1)$. In particular, $z$ is contained in the $G_F$-orbit of an element $(1, g' l) \cdot h_{J}$ for some $l \in L_K$ and $g' \in U_{P_{\d(K)}} w U_{P_{K}}$, where $K=I(J, \d; w)$.

Set $F': L_K \to L_K$ by $F'(l_1)=w \i F(l_1) w$. By Lang's theorem for $F'$, we can find $l_1 \in L_K$ such that $F'(l_1) l l_1 \i=1$. Then \begin{align*} & (l_1, F(l_1)) (1, g' l) \cdot h_{J}=(1, F(l_1) g' l l_1 \i) \cdot h_{J} \\ & \in (1, U_{P_{\d(K)}} w F'(l_1) l l_1 \i U_{P_K} ) \cdot h_{J} \subset (1, B w B) \cdot h_{J}\end{align*} and $z \in Z_{J, F; w}$.
\end{proof}

\subsection{}\label{6} For any parabolic subgroups $P$ and $Q$ of $G$, we set $P^Q=(P \cap Q) U_P$. It is known that $P^Q$ is a parabolic subgroup of $G$. The following properties are easy to check.

(1) For any $g \in G$, $({}^g P)^{({}^g Q)}={}^g (P^Q)$.

(2) If $P \in \cb$, then $P^Q=P$ for any parabolic subgroup $Q$.

\begin{lem}
Let $J, K \subset I$ and $w \in {}^J W$. Set $J_1=J \cap \Ad(w_1) K$, where $w_1=\min(w W_K)$. Then for $g \in B w B$, we have that $P_J^{({}^g P_K)}=P_{J_1}$.
\end{lem}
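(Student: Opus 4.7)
The plan is to reduce to the case $g=\dot w_1$ by a direct factorisation and then identify the resulting parabolic by a root-group calculation. Write $w=w_1 u$ with $u\in W_K$, so that $w_1\in {}^J W^K$. Then
$$ BwB \subset B\dot w_1 B\dot u B \subset B\dot w_1 P_K, $$
and any $g\in BwB$ factors as $g=b_1\dot w_1 p$ with $b_1\in B$ and $p\in P_K$. Since $p$ normalizes $P_K$, this gives ${}^g P_K={}^{b_1\dot w_1}P_K$. Combining property (1) above with $b_1\in B\subset P_J$ (so that ${}^{b_1}P_J=P_J$) yields
$$ P_J^{({}^g P_K)}={}^{b_1}\bigl(P_J^{({}^{\dot w_1}P_K)}\bigr). $$

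Next I would compute $P_J^{({}^{\dot w_1}P_K)}=(P_J\cap {}^{\dot w_1}P_K)U_{P_J}$ directly. As a closed subgroup containing $T$, it is generated by $T$, the root groups $U_\a$ for $\a\in\Phi^+\setminus\Phi_J$ (from $U_{P_J}$), and the $U_\a$ for roots $\a$ of $P_J$ with $w_1\i\a\in\Phi^+\cup\Phi_K$ (the roots of ${}^{\dot w_1}P_K$). The crucial combinatorial input is that $w_1\in{}^J W^K$ implies $w_1\i(\Phi_J^+)\subset\Phi^+$ and $w_1(\Phi_K^+)\subset\Phi^+$, each obtained by writing a positive root as a nonnegative integer sum of simple roots of $J$ (resp.\ $K$). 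Using the first inclusion, every $\a\in\Phi_J^+$ satisfies $w_1\i\a\in\Phi^+$, so $U_\a\subset P_J\cap{}^{\dot w_1}P_K$; together with $U_{P_J}$ this gives $B\subset P_J^{({}^{\dot w_1}P_K)}$, so this is a standard parabolic.

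To find its type, I would determine which negative simple root groups $U_{-\a_i}$ appear. Since $U_{P_J}$ contributes only positive root groups, $U_{-\a_i}$ must come from $P_J\cap {}^{\dot w_1}P_K$, forcing $-\a_i$ to be a root of both $P_J$ and ${}^{\dot w_1}P_K$. The first condition forces $i\in J$. Then $w_1\i(-\a_i)\in\Phi^-$ by the key fact, so $-\a_i$ is a root of ${}^{\dot w_1}P_K$ only when $w_1\i\a_i\in\Phi_K$, i.e.\ $i\in\Ad(w_1)K$. Conversely any $i\in J_1=J\cap\Ad(w_1)K$ satisfies both conditions (since then $w_1\i\a_i\in\Phi_K^+$). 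Hence the simple roots of the Levi of $P_J^{({}^{\dot w_1}P_K)}$ are exactly $J_1$, and $P_J^{({}^{\dot w_1}P_K)}=P_{J_1}$. Finally $b_1\in B\subset P_{J_1}$ implies ${}^{b_1}P_{J_1}=P_{J_1}$, and the lemma follows.

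The main (indeed, the only non-routine) step is the type identification, specifically excluding any negative simple root $-\a_i$ with $i\in J\setminus\Ad(w_1)K$ from $P_J\cap{}^{\dot w_1}P_K$; this is precisely what the minimality property $w_1\in{}^J W$ (equivalently, $w_1\i\Phi_J^+\subset\Phi^+$) delivers.
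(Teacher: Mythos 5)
Your proof is correct. The reduction to $g=\dot w_1$ (via $BwB\subset B\dot w_1 P_K$ and property 2.3\,(1), using that $B$ lies in both $P_J$ and the resulting standard parabolic) is exactly the step the paper performs, though you spell it out more fully than the paper's one-line "it suffices to take $g=\dot w$". Where you diverge is in identifying $P_J^{({}^{\dot w_1}P_K)}$: the paper factors $P_J\cap{}^{\dot w_1}P_K$ as $(L_J\cap{}^{\dot w_1}L_K)(L_J\cap{}^{\dot w_1}U_{P_K})(U_{P_J}\cap{}^{\dot w_1}P_K)$, invokes $L_J\cap{}^{\dot w_1}L_K=L_{J_1}$ for $w_1\in{}^JW^K$, and checks that the unipotent factors collapse to $(L_J\cap U)U_{P_J}$; you instead show the group contains $B$ (hence is standard, given that $P^Q$ is known to be parabolic) and read off its type from the negative simple root groups it contains. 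Both routes rest on the same combinatorial input, $w_1^{-1}\Phi_J^+\subset\Phi^+$ and $w_1\Phi_K^+\subset\Phi^+$; yours trades the product decomposition for a weight-space argument, which is a fair exchange provided you note that a product of $T$-stable directly-spanned subgroups contains $U_{-\alpha_i}$ only if one of the factors does. One small point you pass over quickly: in the step "$w_1^{-1}\alpha_i\in\Phi_K$, i.e.\ $i\in\Ad(w_1)K$" you should observe that a root $w_1^{-1}\alpha_i\in\Phi_K^+$ mapped by $w_1\in W^K$ to the simple root $\alpha_i$ must itself be simple (expand it in the $\alpha_j$, $j\in K$, and use $w_1\alpha_j\in\Phi^+$); this is standard and does not affect correctness.
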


\begin{proof} By \ref{6} (1), it suffices to prove the case where $g=\dot w$. Now
\begin{align*} & (P_J)^{({}^{\dot w} P_K)}=(P_J)^{({}^{\dot w_1} P_K)}=(L_J \cap {}^{\dot w_1} L_K) (L_J \cap {}^{\dot w_1} U_{P_K}) (U_{P_J} \cap {}^{\dot w_1} P_K) U_{P_J} \\ &=(L_J \cap {}^{\dot w_1} L_K) ((L_J \cap U) \cap {}^{\dot w_1} L_K) (L_J \cap {}^{\dot w_1} U_{P_K}) (U_{P_J} \cap {}^{\dot w_1} P_K) U_{P_J}.
\end{align*}

Since $w \in {}^J W$ and $w_1=\min(w W_K)$, we have that $w_1 \in {}^J W^K$. Therefore $L_J \cap {}^{\dot w_1} L_K=L_{J_1}$ and
\begin{align*}
& ((L_J \cap U) \cap {}^{\dot w_1} L_K) (L_J \cap {}^{\dot w_1} U_{P_K}) \\ &=((L_J \cap U) \cap {}^{\dot w_1} L_K) ((L_J \cap U) \cap {}^{\dot w_1} U_{P_K}) \\ &=(L_J \cap U) \cap {}^{\dot w_1} P_K=L_J \cap U.
\end{align*}

So $(P_J)^{({}^{\dot w} P_K)}=L_{J_1} (L_J \cap U) U_{P_J}=L_{J_1} U=P_{J_1}$.
\end{proof}


\begin{lem} To each $(P, g U_P) \in Z_{J}$, we associate a sequence $(P^n, J_n, w_n)_{n \ge 0}$ as follows
\begin{gather*} P^0=P, \quad P^n=(P^{n-1})^{F({}^g P^{n-1})} \qquad \text{ for } n \ge 1, \\ J_n \subset I \text{ with } P^n \in \cp_{J_n}, \quad w_n=\pos(P^n, F({}^g P^n)) \qquad \text{ for } n \ge 0. \end{gather*}

Let $w \in W^J$. Let $(P, g U_P) \in Z_{J, F; w}$ and $(P^n, J_n, w_n)_{n \ge 0}$ be the sequence associated to $(P, g U_P)$. Then $(J_n, w_n)_{n \ge 0} \in \ct(J, \d)$ and $w_m \i=w$ for $m \gg 0$.
\end{lem}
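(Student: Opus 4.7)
My plan is to verify that $(J_n, w_n)_{n\ge 0}$ satisfies axioms (a)--(d) of $\ct(J, \d)$ from subsection 1.2, and then to identify its Bezrukavnikov image as $w$.

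Axiom (a) holds since $P^0 = P \in \cp_J$, and axiom (c) is immediate from the definition of $\pos$. For (b), I $G$-conjugate so that $P^{n-1} = P_{J_{n-1}}$ and $F({}^g P^{n-1}) = {}^{\dot w_{n-1}} P_{\d(J_{n-1})}$; since $w_{n-1} \in {}^{J_{n-1}} W^{\d(J_{n-1})}$, the preceding lemma (applied with $J = J_{n-1}$ and $K = \d(J_{n-1})$) yields $P^n = P_{J_n}$ with $J_n = J_{n-1} \cap \Ad(w_{n-1}) \d(J_{n-1})$. For (d), I exploit that $P^n \subset P^{n-1}$ forces $F({}^g P^n) \subset F({}^g P^{n-1})$: in the normalization above, $F({}^g P^n) = {}^{\dot w_{n-1} p} P_{\d(J_n)}$ for some $p \in P_{\d(J_{n-1})}$, and Bruhat-decomposing $p \in BvB$ with $v \in W_{\d(J_{n-1})}$ and using $w_{n-1} \in W^{\d(J_{n-1})}$ to get length-additive multiplication gives $\dot w_{n-1} p \in B w_{n-1} v B$, whence $w_n \in W_{J_n} w_{n-1} v W_{\d(J_n)} \subset W_{J_n} w_{n-1} W_{\d(J_{n-1})}$.

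With (a)--(d) established, Bezrukavnikov's bijection yields a well-defined $w' \in W^J$ with $w_m^{-1} = w'$ for $m \gg 0$, and I must show $w' = w$. Since $G_F$-conjugation preserves both the types $J_n$ and the relative positions $w_n$, the sequence depends only on the $G_F$-orbit of $(P, gU_P)$, so it suffices to compute for the representative $(P, gU_P) = (1, \dot w) \cdot h_J = ({}^{\dot w} P_J, \dot w^{-1} U_{{}^{\dot w} P_J})$ of $Z_{J, F; w}$, for which $g = \dot w^{-1}$. An easy induction using $F(P_{J_n}) = P_{\d(J_n)}$ gives $P^n = {}^{\dot w} P_{J_n}$, $F({}^g P^n) = P_{\d(J_n)}$, and (using $w^{-1} \in {}^J W \subset {}^{J_n} W$) $w_n = \min(w^{-1} W_{\d(J_n)})$. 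At stabilization, axiom (b) forces $\Ad(w_\infty) \d(J_\infty) = J_\infty$ as a bijection of simple roots; writing $w_\infty = w^{-1} u_\infty^{-1}$ with $u_\infty \in W_{\d(J_\infty)}$ and transferring this bijection through $u_\infty$ (which normalizes $W_{\d(J_\infty)}$) gives $\Ad(w) J_\infty = \d(J_\infty)$, so $J_\infty \subset I(J, \d; w)$. The reverse inclusion is shown by induction on $n$, using the defining simple-root bijection of $I(J, \d; w)$ to verify $I(J, \d; w) \subset \Ad(w_n) \d(J_n)$. Thus $J_\infty = I(J, \d; w)$, which forces $w^{-1} \in {}^{J_\infty} W^{\d(J_\infty)}$ and hence $w_\infty = w^{-1}$, so $w' = w$.

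The main obstacle is the final identification $J_\infty = I(J, \d; w)$: the inductive step $I(J, \d; w) \subset J_{n+1}$ requires showing that the $W_{\d(J_n)}$-minimization defining $w_n$ preserves the simple-root bijection between $I(J, \d; w)$ and $\d(I(J, \d; w))$, which demands careful tracking of how the correction factor $u_n$ interacts with the simple roots indexed by $I(J, \d; w)$.
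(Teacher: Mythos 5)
Your verification of axioms (a)--(d) and your computation on the representative $(1,\dot w)\cdot h_J$ (giving $P^n={}^{\dot w}P_{J_n}$ and $w_n=\min(w^{-1}W_{\delta(J_n)})$ via the lemma on $P_J^{({}^gP_K)}$) are sound and run parallel to the paper's argument. The problem is the final identification $w_m^{-1}=w$, which you yourself flag as the main obstacle and do not complete; moreover, the sketch you do give for the inclusion $J_\infty\subset I(J,\delta;w)$ is circular. From $\mathrm{Ad}(w_\infty)\delta(J_\infty)=J_\infty$ and $w=u_\infty^{-1}w_\infty^{-1}$ you get $\mathrm{Ad}(w)J_\infty=\mathrm{Ad}(u_\infty^{-1})\delta(J_\infty)$; concluding that this equals $\delta(J_\infty)$ requires $u_\infty^{-1}$ to permute the \emph{simple} roots of $\delta(J_\infty)$, and the only element of $W_{\delta(J_\infty)}$ doing that is the identity --- i.e.\ you need $u_\infty=1$, which is exactly the statement $w_\infty=w^{-1}$ you are trying to prove. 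The reverse inclusion $I(J,\delta;w)\subset J_\infty$ is likewise only described, not proved. Knowing merely that $(J_n,w_n)\in\mathcal T(J,\delta)$ and $w_\infty=\min(w^{-1}W_{\delta(J_\infty)})$ does not pin down the Bédard image: it only gives $w=u_\infty^{-1}w_\infty^{-1}$ with both $w$ and $w_\infty^{-1}$ in $W^J$, which does not force $u_\infty=1$ without further input. In effect you are re-deriving Bédard's theorem and stopping at its hardest step.

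The repair is to run the comparison in the opposite direction, which is what the paper does. Let $(J'_n,w'_n)\in\mathcal T(J,\delta)$ be the element that Bédard's bijection assigns to $w$; it has the known property $w'_n=\min(w^{-1}W_{\delta(J'_n)})$ for all $n$. Your computation shows that the geometric sequence satisfies the identical recursion: $J_0=J=J'_0$, and if $J_n=J'_n$ then $w_n=\min(w^{-1}W_{\delta(J_n)})=w'_n$ and $J_{n+1}=J_n\cap\mathrm{Ad}(w_n)\delta(J_n)=J'_{n+1}$. Hence $(J_n,w_n)=(J'_n,w'_n)$ for every $n$, and both conclusions --- membership in $\mathcal T(J,\delta)$ and $w_m^{-1}=w$ for $m\gg 0$ --- follow at once, with no need for your separate verification of (a)--(d) or for the delicate analysis of $J_\infty$.
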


\begin{proof} Using \ref{6} (1), it is easy to see by induction on $n$ that the sequence associated to $({}^{F(h)} P, h g U_P F(h) \i)$ is $({}^{F(h)} P^n, J_n, w_n)_{n \ge 0}$. Then it suffices to prove the case where $(P, g U_P)=(P_J, k U_{P_J})$ for some $k \in B \d \i(w) \i B$.

Let $(J'_n, w'_n)_{n \ge 0} \in \ct(J, \d)$ be the element that corresponds to $w$. Then $w'_n=\min(w \i W_{\d(J_n)})$ for $n \ge 0$. By the previous lemma, we can show by induction on $n$ that $P^n=P_{J'_n}$ for all $n \ge 0$. Then $J_n=J'_n$ for $n \ge 0$. Moreover, $w_n=\pos(P^n, F({}^k P^{n}))=\pos(P_{J_n}, {}^{F(k)} P_{\d(J_n)})=w'_n$ since $k \in B \d \i(w) \i B$.
\end{proof}

(A similar result with a similar proof appears in \cite[Lemma 2.3]{H1}.)

\subsection{} We can now define a map $\b: Z_{J} \to W^J$ by $\b(P, g U_P)=w_m \i$ for $m \gg 0$, where $(P^n, J_n, w_n)_{n \ge 0}$ is the sequence associated to $(P, g U_P)$. Then $Z_{J}=\sqcup_{w \in W^J} \b \i(w)$ is a partition of $Z_{J}$ into locally closed subvarieties. Since $Z_{J, F; w} \subset \b \i (w)$ and $Z_{J}=\cup_{w \in W^J} Z_{J, F; w}$, we have that $Z_{J, F; w}=\b \i (w)$ and $$Z_{J}=\sqcup_{w \in W^J} Z_{J, F; w}.$$

Fix $w \in W^J$ and let $(J_n, w_n)_{n \ge 0}$ be the element in $\ct(J, \d)$ that corresponds to $w$. Clearly, the map $(P, g U_P) \mapsto P^m$ for $m \gg 0$ is a morphism $\v: Z_{J, F; w} \to \cp_{I(J, \d; w)}$.

\begin{lem}
Let $w \in W^J$. Set $x=\d \i(w) \i$ and $K=\d \i I(J, \d; w)$. Then $$(U_{P_K})_F (x, 1) \cdot h_J=(U_{P_K} x, U_{P_{\d(K)}}) \cdot h_J.$$
\end{lem}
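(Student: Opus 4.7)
My plan is to establish the two inclusions separately. The forward inclusion $(U_{P_K})_F (x,1) \cdot h_J \subset (U_{P_K} x, U_{P_{\d(K)}}) \cdot h_J$ is immediate once one notes that $\d(K) = I(J, \d; w)$ and that $F$ maps parabolics of type $K$ to those of type $\d(K)$, so $F(U_{P_K}) = U_{P_{\d(K)}}$; hence $(u, F(u))(x,1) \cdot h_J$ lies in the right-hand side for every $u \in U_{P_K}$.

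For the reverse inclusion, fix $(u_1, u_2) \in U_{P_K} \times U_{P_{\d(K)}}$ and seek $u \in U_{P_K}$ with $(u x, F(u)) \cdot h_J = (u_1 x, u_2) \cdot h_J$. Using that the stabilizer of $h_J$ in $G \times G$ is $R_J = \{(lv_1, lv_2); l \in L_J, v_i \in U_{P_J}\}$, this amounts to finding $u$ with $(x \i u \i u_1 x, F(u) \i u_2) \in R_J$; equivalently, both components should lie in $P_J = L_J U_{P_J}$ with matching Levi component. The second component lies in $U_{P_{\d(K)}} \subset P_J$ automatically because $\d(K) \subset J$. Substituting $u = u_1 u_0$ with $u_0 \in U_{P_K}$ as the new unknown, the condition on the first component restricts $u_0$ to the subgroup $V := U_{P_K} \cap x P_J x \i$, and the Levi-matching requirement reduces to a single equation in the unipotent group $L_J \cap U_{P_{\d(K)}}$.

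To solve this equation I would invoke Lang's theorem for an appropriate Steinberg endomorphism --- built from $F$ together with a conjugation twist by $\dot x$ --- on a connected unipotent subgroup naturally attached to the setup. Since any such Frobenius-like map inherits finite fixed-point set from $F$, its associated Lang map is surjective and produces the required $u_0$. The defining relations $K = \d \i I(J, \d; w)$, $x = \d \i(w \i)$, together with the identity $\Ad(x)\d(K) = K$ (an immediate consequence of $\Ad(w) I(J, \d; w) = \d(I(J, \d; w))$), ensure that the twist by $\dot x$ intertwines the Levi and unipotent decompositions of $P_K$ and $P_{\d(K)}$ as needed. The main obstacle is pinpointing the correct unipotent group on which to apply Lang's theorem and verifying that the resulting solution lies in $V$; this is a root-theoretic bookkeeping step that uses $x \in {}^{\d \i(J)} W$, the property that $x$ is the minimal-length representative in its coset.
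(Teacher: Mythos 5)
Your reduction is essentially the paper's: after absorbing the second factor into $(U_{P_K})_F$ and into the stabilizer $R_J$, everything comes down to solving, for each $v$ in $L_J\cap U_{P_{\d(K)}}$, an equation of the form $x\i u x\, F(u)\i \in v\, U_{P_J}$ with $u \in U_{P_K}\cap L_{\d\i(J)}$. But that equation is the entire content of the lemma, and you leave it unsolved: you say you would "invoke Lang's theorem for an appropriate Steinberg endomorphism on a connected unipotent subgroup" and yourself flag that "pinpointing the correct unipotent group" is the main obstacle. That obstacle is genuine, not bookkeeping. The natural twisted endomorphism $\s=\Ad(\dot x)\circ F$ does \emph{not} stabilize $U_{P_K}\cap L_{\d\i(J)}$: although $\Ad(x)\d(K)=K$, conjugation by $\dot x$ sends $U_{P_{\d(K)}}$ to a group different from $U_{P_K}$ (it moves some of the relevant positive roots to negative ones), and the source $U_{P_K}\cap L_{\d\i(J)}$ and target $L_J\cap U_{P_{\d(K)}}$ of your "single equation" are different groups. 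So the map $u\mapsto x\i u x\,F(u)\i$ is not the Lang map of a $\s$-stable connected group, and Lang--Steinberg does not apply off the shelf; one must first understand how $\Ad(x\i)$ interacts with the filtration of $U_{P_K}\cap L_{\d\i(J)}$.

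The paper supplies exactly this missing ingredient as Lemma 1.1: writing $(J_n,w_n)\in\ct(J,\d)$ for the element corresponding to $w$, one has $x\i(L_{\d\i(J)}\cap U_{P_{\d\i(J_1)}})x\subset U_{P_J}$ and $x\i(L_{\d\i(J_i)}\cap U_{P_{\d\i(J_{i+1})}})x\subset L_{J_{i-1}}\cap U_{P_{J_i}}$ for $i\ge 1$. It then factors $v=v_m\cdots v_0$ with $v_i\in L_{J_i}\cap U_{P_{J_{i+1}}}$ and constructs $u=u_m\cdots u_0$ by a finite descending induction, defining each $u_k$ by applying $F^{-1}$ (the Frobenius is bijective on $\kk$-points) to an explicit element shown to lie in $L_{J_k}\cap U_{P_{J_{k+1}}}$; no Lang's theorem is used here at all. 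Until you either reproduce such a filtration argument or exhibit a concrete connected $\s$-stable group whose Lang map realizes $u\mapsto x\i u x\,F(u)\i$ modulo $U_{P_J}$ and prove surjectivity onto $L_J\cap U_{P_{\d(K)}}$, the proof is incomplete at its central step.
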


\begin{proof} Notice that
\begin{align*}
(U_{P_K} x, U_{P_{\d(K)}}) \cdot h_J &=(U_{P_K})_F (x, U_{P_{\d(K)}}) \cdot h_J \\ &=(U_{P_K})_F (x, U_{P_{\d(K)}} \cap L_J) \cdot h_J \\ &=(U_{P_K})_F (x (U_{P_{\d(K)}} \cap L_J), 1) \cdot h_J.
\end{align*}

So it suffices to show that for any $v \in U_{P_{\d(K)}} \cap L_J$, there exists $u \in U_{P_K} \cap L_{\d \i(J)}$ such that $x \i u x F(u) \i \in v U_{P_J}$.

Let $(J_n, w_n)_{n \ge 0} \in \ct(J, \d)$ be the element that corresponds to $w$. By Lemma 1.1,
\begin{gather*}
x \i (L_{\d \i(J)} \cap U_{P_{\d \i(J_1)}}) x \subset U_{P_J}, \\ x \i (L_{\d \i(J_i)} \cap U_{P_{\d \i(J_{i+1})}}) x \subset L_{J_{i-1}} \cap U_{P_{J_i}} \text{ for } i \ge 1.
\end{gather*}

We have that $\d(K)=J_m$ for some $m \in \mathbb N$. Now $v=v_m v_{m-1} \cdots v_0$ for some $v_i \in L_{J_i} \cap U_{P_{J_{i+1}}}$. We define $u_i \in L_{\d \i(J_i)} \cap U_{P_{\d \i(J_{i+1})}}$ as follows:

Let $u_m=1$. Assume that $k<m$ and that $u_i \in L_{\d \i(J_i)} \cap U_{P_{\d \i(J_{i+1})}}$ are already defined for $k<i \le m$ and that $$(x \i (u_m u_{m-1} \cdots u_{k+2}) \i x) F(u_m u_{m-1} \cdots u_{k+1}) \i=v_m v_{m-1} \cdots v_{k+1}.$$ Let $u_k$ be the element with
\begin{align*} F(u_k) \i &=(x \i (u_m u_{m-1} \cdots u_{k+1}) x) v_m v_{m-1} \cdots v_k F(u_m u_{m-1} \cdots u_{k+1}) \\ &=(x \i u_{k+1} x) F(u_m u_{m-1} \cdots u_{k+1}) \i v_k F(u_m u_{m-1} \cdots u_{k+1}) \\ & \in L_{J_K} \cap U_{P_{J_{k+1}}}.\end{align*} Thus $u_{k+1} \in L_{\d \i(J_k)} \cap U_{P_{\d \i(J_{k+1})}}$ and that $$(x \i (u_m u_{m-1} \cdots u_{k+1}) x) F(u_m u_{m-1} \cdots u_k) \i=v_m v_{m-1} \cdots v_k.$$

This completes the inductive definition.

Now set $u=u_m u_{m-1} \cdots u_0$. Then
\begin{align*} & (x \i u x) F(u) \i \\ &=(x \i (u_m u_{m-1} \cdots u_1) x ) F(u) \i (F(u) (x \i u_0 x) F(u) \i) \\ & \in v U_{P_{\d(J)}}. \end{align*} The lemma is proved.
\end{proof}

By the proof of Lemma \ref{7}, $$Z_{J, F; w}=G_F (U_{P_{\d \i I(J, \d; w)}} \d \i(w) \i U_{P_{\d(I(J, \d; w))}}, 1) \cdot h_{J}.$$ Then we have the following consequence.

\begin{cor}
Let $w \in W^J$. Then $G_F$ acts transitively on $Z_{J, F; w}$.
\end{cor}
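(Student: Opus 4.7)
The plan is to identify $Z_{J, F; w}$ with the single $G_F$-orbit of the distinguished point $(x, 1) \cdot h_J$, where $x = \d\i(w)\i$. Setting $K = \d\i I(J, \d; w)$ so that $\d(K) = I(J, \d; w) \subset J$, the two key inputs are the displayed formula stated immediately before the corollary,
\[
Z_{J, F; w} = G_F (U_{P_K} x U_{P_{\d(K)}}, 1) \cdot h_J,
\]
together with Lemma 2.5, which identifies $(U_{P_K})_F (x, 1) \cdot h_J$ with $(U_{P_K} x, U_{P_{\d(K)}}) \cdot h_J$.

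Since $(x, 1) \cdot h_J$ belongs to $Z_{J, F; w}$ (take $u_1 = u_2 = 1$ and the identity of $G_F$ in the displayed formula), it suffices to prove the inclusion $(U_{P_K} x U_{P_{\d(K)}}, 1) \cdot h_J \subset G_F \cdot (x, 1) \cdot h_J$. I would establish this by rewriting the left-hand side so that Lemma 2.5 applies directly. For $u_1 \in U_{P_K}$ and $u_2 \in U_{P_{\d(K)}}$, the $G \times G$-action formula gives
\[
(u_1 x u_2, 1) \cdot h_J = (P_J, u_1 x u_2 U_{P_J}) \quad \text{and} \quad (u_1 x, u_2\i) \cdot h_J = ({}^{u_2\i} P_J, u_1 x U_{P_J} u_2).
\]
These two points of $Z_J$ coincide because $\d(K) = I(J, \d; w) \subset J$ forces $U_{P_{\d(K)}} \subset P_J$, so on the one hand $u_2\i \in P_J$ normalizes $P_J$ and hence ${}^{u_2\i} P_J = P_J$, and on the other hand $U_{P_J} u_2 = u_2 U_{P_J}$ by normality of $U_{P_J}$ in $P_J$. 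Therefore $(U_{P_K} x U_{P_{\d(K)}}, 1) \cdot h_J = (U_{P_K} x, U_{P_{\d(K)}}) \cdot h_J$, and Lemma 2.5 realizes the right-hand side as $(U_{P_K})_F (x, 1) \cdot h_J \subset G_F \cdot (x, 1) \cdot h_J$. Composing with the outer $G_F$-factor in the first displayed formula then collapses $Z_{J, F; w}$ to the single orbit $G_F \cdot (x, 1) \cdot h_J$.

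No essential obstacle is expected here: the substantive content of the argument -- constructing, by an inductive use of Lang's theorem inside a chain of Levi factors, an element that moves $(x, 1) \cdot h_J$ to an arbitrary point of $(U_{P_K} x, U_{P_{\d(K)}}) \cdot h_J$ -- has already been carried out in Lemma 2.5. The only additional ingredient is the elementary containment $\d(K) = I(J, \d; w) \subset J$ coming from the definition of $I(J, \d; w)$, which is what allows the unipotent factor $u_2$ to be transferred between the two slots of the $G \times G$-action.
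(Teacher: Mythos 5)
Your proof is correct and is exactly the paper's argument: the paper deduces the corollary from the displayed formula $Z_{J,F;w}=G_F(U_{P_K}\,\d\i(w)\i\,U_{P_{\d(K)}},1)\cdot h_J$ together with Lemma 2.5, leaving the bookkeeping implicit. You have simply spelled out the step the paper omits, namely that $U_{P_{\d(K)}}\subset P_J$ allows the right-hand unipotent factor to be moved to the second slot so that Lemma 2.5 applies verbatim.
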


\begin{rmk}
Therefore there are only finitely many $G_F$-orbits on $Z_J$ and they are indexed by $W^J$. This is quite different from the set of $G_{\D}$-orbits on $Z_J$.
\end{rmk}

\begin{prop}
Let $w \in W$. Then $$\overline{G_F (B, B w) \cdot h_{J}}=\sqcup_{w' \in W^J, w' \le_{J, \d} w} Z_{J, F; w'}.$$
\end{prop}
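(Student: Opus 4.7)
The plan is to prove the two set inclusions separately, with the bulk of the work going into the $\subseteq$ direction, which I would set up by induction on $l(w)$.

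\textbf{Inclusion $\supseteq$.} Let $w' \in W^J$ with $w' \le_{J, \d} w$. By the extended definition in subsection 1.4, there is $v \in (W_J \cdot w')_{\min}$ with $v \le w$ in the Bruhat order. Since $W_J \cdot w' \cap W^J = \{w'\}$, property 1.3(b) gives $v \approx_{J, \d} w'$, and Lemma 2.1(2) therefore gives $G_F(B, BvB) \cdot h_{J} = G_F(B, Bw'B) \cdot h_{J} = Z_{J, F; w'}$. Combining $BvB \sub \overline{BwB}$ with the continuity of the action morphism $G \times G \to Z_J$ and the $G_F$-stability of closures then yields $Z_{J, F; w'} \sub \overline{G_F(B, BwB) \cdot h_{J}}$.

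\textbf{Inclusion $\sub$ by induction on $l(w)$.} For the base case $l(w) = 0$, we have $w = 1$, and $Z_{J, F; 1}$ coincides with the closed subvariety $\{(P, g U_P) \in Z_J : P = F({}^g P)\}$ (preimage of the diagonal under the natural $G_F$-equivariant morphism $Z_J \to \cp_J \times \cp_{\d(J)}$, $(P, g U_P) \mapsto (P, F({}^g P))$); since the only $u \in W^J$ with $u \le_{J, \d} 1$ is $u = 1$, the base case follows. For the inductive step, apply 1.3(a) to obtain $w_1 \in W^J$ and $v \in W_{I(J, \d; w_1)}$ with $w \to_{J, \d} w_1 v$. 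If $w \ne w_1 v$, then $l(w_1 v) < l(w)$ (length-preserving elementary $\to_{J, \d}$-moves must be trivial, as in the proof of Lemma 2.1), and iterating Lemma 2.1(1) along the chain gives $G_F(B, BwB) \cdot h_{J} \sub G_F(B, B w_1 v B) \cdot h_{J} \cup \bigcup_{v' < w} G_F(B, Bv' B) \cdot h_{J}$. Taking closures, invoking the inductive hypothesis on $w_1 v$ and each $v' < w$, and using $w_1 v \le w$ in Bruhat (each $\to_{J, \d}$-step is length-non-increasing, hence descends in the Bruhat order) together with transitivity of $\le_{J, \d}$, we obtain $\overline{G_F(B, BwB) \cdot h_{J}} \sub \bigcup_{u \le_{J, \d} w} Z_{J, F; u}$. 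If $w = w_1 v$, Lemma 2.1(2) gives $G_F(B, BwB) \cdot h_{J} = Z_{J, F; w_1}$; when $v \ne 1$ so $l(w_1) < l(w)$, the inductive hypothesis for $w_1$ finishes the argument, using the identity $\{u \in W^J : u \le_{J, \d} w_1\} = \{u \in W^J : u \le_{J, \d} w\}$ which follows from the independence of the definition in 1.4 on the choice of representative in $(W_J \cdot w_1)_{\min}$.

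\textbf{Main obstacle.} The genuinely hard subcase is $w = w_1 \in W^J$ distinguished (i.e., $v = 1$), where length induction does not reduce the problem. To handle it I would use the explicit parametrization $Z_{J, F; w} = G_F(U_{P_{\d \i K}} \d \i(w) \i U_{P_{\d(K)}}, 1) \cdot h_{J}$ from subsection 2.3 (with $K = I(J, \d; w)$) and decompose the closure in $G$ of the $U$-double coset $U_{P_{\d \i K}} \dot x U_{P_{\d(K)}}$ (where $x = \d \i(w) \i$) into a union of similar double cosets $U_{P_{\d \i K}} \dot y U_{P_{\d(K)}}$ for appropriate $y \le x$. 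Each such $y$ corresponds via the reduction procedure underlying Lemma \ref{7} to a stratum $Z_{J, F; u'}$ with $u' \le_{J, \d} w$. Establishing this combinatorial--geometric link between the Bruhat closure of the $U$-double coset and the order $\le_{J, \d}$ on $W^J$ is the key technical step.
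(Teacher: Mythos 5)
Your $\supseteq$ direction and the easy cases of the induction are fine and match the paper. But the ``main obstacle'' you isolate --- the case where $w\in W^J$ is distinguished and the length induction has nothing to bite on --- is exactly where your argument stops being a proof: you reduce it to ``establishing a combinatorial--geometric link between the Bruhat closure of the $U$-double coset $U_{P_{\d\i K}}\dot x U_{P_{\d(K)}}$ and the order $\le_{J,\d}$,'' which is a restatement of the difficulty rather than a resolution of it (and closures of such $U\times U$-double cosets are not pleasant objects to describe directly). The idea you are missing is that one should control the closure of the $G_F$-saturation \emph{before} doing any combinatorics: since the multiplication map $p\colon G_F\times_{B_F}Z_J\to Z_J$, $(g,z)\mapsto (g,F(g))\cdot z$, is proper, one has
$$\overline{G_F(B,BwB)\cdot h_J}=G_F\,\overline{(B,BwB)\cdot h_J}=\textstyle\bigcup_{v\le w}G_F(B,BvB)\cdot h_J,$$
i.e.\ taking the closure only adds sets $G_F(B,BvB)\cdot h_J$ with $v<w$ in the Bruhat order. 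With this identity the ``hard case'' disappears: for $w\in W^J$ distinguished the closure is $Z_{J,F;w}$ together with sets indexed by $v<w$, and each of those is handled by the induction hypothesis on length. This is precisely how the paper argues, and it is the one ingredient your outline does not supply.

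Two smaller points. First, the equality $G_F(B,Bw_1vB)\cdot h_J= Z_{J,F;w_1}$ (for $v\in W_{I(J,\d;w_1)}$, $v\ne 1$) is not Lemma 2.1(2) --- $w_1v$ and $w_1$ have different lengths, so they are not $\approx_{J,\d}$-related; what is available is the inclusion $G_F(B,Bw_1vB)\cdot h_J\subset G_F(B,Bw_1B)\cdot h_J$ coming from the Lang's theorem argument in the proof of Lemma 2.2, and an inclusion is all you need once the closure identity above is in place. Second, in that same case you invoke the set identity $\{u\le_{J,\d}w_1\}=\{u\le_{J,\d}w_1v\}$, which requires an argument about minimal-length elements of the orbits $W_J\cdot u$ relative to $w_1$ versus $w_1v$; the paper's route avoids having to prove this by running the induction only over the Bruhat-smaller $v<w$ and never over $w_1$ itself.
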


\begin{rmk} Similar results appear in \cite[Proposition 4.6]{H3}, \cite[Corollary 5.5]{H2} and \cite[Proposition 5.8]{H4}.  The following proof is similar to the proof of \cite[Proposition 5.8]{H4}.
\end{rmk}

\begin{proof} We prove by induction on $l(w)$.

Using the proper map $p: G_F \times_{B_F} Z_J \to Z_J$ defined by $(g, z) \mapsto (g, F(g)) \cdot z$, one can prove that $$\overline{G_F (B, B w) \cdot h_J}=G_F \overline{(B, B w) \cdot h_J}=\cup_{v \le w} G_F (B, B v) \cdot h_J.$$

By 1.3 (a), $w \to_{J, \d} w_1 v$ for some $w_1 \in W^J$ and $v \in W_{I(J, \d; w)}$. By Lemma 2.1,
\begin{align*}\overline{G_F (B, B w) \cdot h_J} &=G_F (B, B w) \cdot h_J \cup \cup_{w'<w} G_F (B, B w') \cdot h_J \\ &=G_F (B, B w_1 v) \cdot h_J \cup \cup_{w'<w} G_F (B, B w') \cdot h_J. \end{align*}

By the proof of Lemma 2.2, $G_F (B, B w_1 v) \cdot h_J \subset G_F (B, B w_1) \cdot h_J$. Thus by induction hypothesis, $$
\overline{G_F (B, B w) \cdot h_J} \subset \cup_{w' \in W^J, w' \le_{J, \d} w} Z_{J, F; w'}.$$

On the other hand, if $w' \in W^J$ with $w' \le_{J, \d} w$, then there exists $w'' \approx_{J, \d} w'$ with $w'' \le w$. Then by Lemma 2.1, $$Z_{J, F; w'}=G_F (B, B w'') \cdot h_J \subset \overline{G_F (B, B w) \cdot h_J}.$$

Therefore $\overline{G_F (B w, B) \cdot h_{J}}=\cup_{w' \in W^J, w' \le_{J, \d} w} Z_{J, F; w'}$. By 2.3, $Z_{J, F; w_1} \cap Z_{J, F; w_2}=\emptyset$ if $w_1, w_2 \in W^J$ and $w_1 \neq w_2$. Thus $\overline{G_F (B w, B) \cdot h_{J}}=\sqcup_{w' \in W^J, w' \le_{J, \d} w} Z_{J, F; w'}$. The proposition is proved.
\end{proof}

\section{A stratification of partial flag varieties}

\subsection{}\label{9}

It is easy to see that there is a canonical bijection between the $G_F$-orbits on $Z_{J}$ and the $R_{J}$-orbits on $(G \times G)/G_F$ which sends $G_F (1, w) \cdot h_J$ to $R_J (1, w \i) G_F/G_F$. Notice that the map $(g_1, g_2) \mapsto g_2 F(g_1) \i$ gives an isomorphism of $R_J$-varieties $(G \times G)/G_F \cong G$, where the $R_J$-action on $G$ is defined by $$(l u_1, l u_2) \cdot g=l u_2 g F(l u_1) \i.$$ Using the results of $G_F$-orbits on $Z_J$ above, we have the following results.

(1) For $w \in {}^J W$, $R_{J} \cdot w=R_{J} \cdot (B w B)$. If moreover, $w' \in (W_J \cdot w)_{\min}$, then $R_{J} \cdot (B w' B)=R_{J} \cdot (B w B)$.

(2) $G=\sqcup_{w \in {}^J W} R_{J} \cdot w$.

(3) For $w \in W$, $\overline{R_{J} \cdot w}=\sqcup_{w' \in {}^J W, (w') \i \le_{J, \d} w \i} R_{J} \cdot w'$.

Notice that if $J=\emptyset$, part (2) above follows easily from Bruhat decomposition. One may regard (2) as an extension of Bruhat's Lemma. We will also discuss a variation of (2) in section 4.

\subsection{}\label{8} Now we review the partition on $\cp_J$ introduced by Lusztig in \cite[section 4]{L3}.

To each $P \in \cp_J$, we associate a sequence $(P^n, J_n, w_n)_{n \ge 0}$ as follows
\begin{gather*} P^0=P, \quad P^n=(P^{n-1})^{F(P^{n-1})} \text{ for } n \ge 1,\\
J_n \subset I \text{ with } P^n \in \cp_{J_n}, \quad w_n=\pos(P^n, F(P^n)) \qquad \text{ for } n \ge 0. \end{gather*}

By \cite[4.2]{L3}, $(J_n, w_n)_{n \ge 0} \in \ct(J)$. Thus we have a map $i: \cp_J \to {}^J W$. For $w \in {}^J W$, let $$\cp_{J, w}=\{P \in \cp_J; w_m=w \text{ for } m \gg 0\}.$$ Then $\cp_J=\sqcup_{w \in {}^J W} \cp_{J, w}$.

It is easy to see that $\cp_{J, w}=\{P \in \cp_J; (P, U_P) \in Z_{J, F; w \i}\}$.

Notice that $\Lie(G_{\D})+\Lie(G_F)=\Lie(G) \oplus \Lie(G)$. Then for any $x \in Z_{J}$, $G_{\D} \cdot x$ and $G_F \cdot x$ intersects transversally at $x$. In particular, $\cp_{J, w}$ is the transversal intersection of $G_{\D} \cdot h_J$ and $Z_{J, F; w \i}$.

We simply write $\cp_{\emptyset, w}$ as $\cb_w$. By 3.2 (3), $$\cb_w=\{B_1 \in \cb; \pos(B_1, F(B_1))=w\}=\{{}^g B; g \i F(g) \in B \dot w B\}.$$

Since the Lang isogeny $g \i F(g)$ is an isomorphism $G^F \backslash G \to G$, we have that \[\tag{a} \overline{\cb_w}=\sqcup_{v \le w} \cb_v.\]

\

Now we can prove our main theorem.

\begin{thm} Let $p: \cb \to \cp_J$ be the morphism which sends a Borel subgroup $B'$ to the unique parabolic subgroup in $\cp_J$ that contains $B'$. Then

(1) For $w \in {}^J W$, $p(\cb_w)=\cp_{J, w}$. If moreover, $v \in (W_J \cdot w)_{\min}$, then $p(\cb_v)=p(\cb_w)=\cp_{J, w}$.

(2) For $w \in W$, $\overline{\cp_{J, w}}=p(\overline{\cb_w})=\sqcup_{w' \in {}^J W, (w') \i \le_{J, \d} w \i} \cp_{J, w'}$.
\end{thm}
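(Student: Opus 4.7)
My plan is to prove (1) using the parametrization of elements of $Z_{J,F;w^{-1}}$ as $(g, F(g))\cdot(1,\pi)\cdot h_J$ with $\pi\in B\dot w^{-1} B$ developed in Section~2, and then to deduce (2) from Proposition~2.7 via the transversal-intersection description $\cp_{J,w} = G_\Delta\cdot h_J\cap Z_{J,F;w^{-1}}$ from Section~3.2.

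For the first sentence of (1), the key identity to establish is
\[
(g, F(g))\cdot(1, L(g)^{-1})\cdot h_J = ({}^g P_J, U_{{}^g P_J}), \qquad L(g):=g^{-1} F(g),
\]
which I would verify by direct computation: the first coordinate uses $F(g)L(g)^{-1}=g$, and the second uses $gL(g)F(g)^{-1}=1$. For $B'={}^g B\in\cb_w$ we have $L(g)\in B\dot w B$, so $L(g)^{-1}\in B\dot w^{-1} B$, placing $(p(B'), U_{p(B')})\in Z_{J,F;w^{-1}}$ and giving $p(\cb_w)\subset\cp_{J,w}$. For the reverse inclusion, I would start with any $(P, U_P)\in Z_{J,F;w^{-1}}$ written as $(g_0, F(g_0))(b_1, b_2\dot w^{-1} b_3)\cdot h_J$, absorb $(b_1, F(b_1))$ into the $G_F$-factor to reduce to $(g, F(g))(1,\pi)\cdot h_J$ with $\pi\in B\dot w^{-1} B$, and match coordinates: this forces $\pi^{-1} L(g)^{-1}\in U_{P_J}$, whence $L(g)\in B\dot w B$, so $B':={}^g B\in\cb_w$ satisfies $p(B')={}^g P_J = P$.

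The second sentence of (1) should follow from property~1.3(b) combined with Lemma~2.1(2): for $v\in(W_J\cdot w)_{\min}$ with $w\in{}^JW$, these together give $G_F(B, B\dot v^{-1} B)\cdot h_J = G_F(B, B\dot w^{-1} B)\cdot h_J = Z_{J,F;w^{-1}}$, after relating the orbit $W_J\cdot w$ to the minimality conditions in $W^J$ via inversion. The two directions above then apply verbatim with $v$ in place of $w$.

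For (2), since $p:\cb\to\cp_J$ is proper, I would use $p(\overline{\cb_w}) = \overline{p(\cb_w)} = \overline{\cp_{J,w}}$ from (1). The transversality of $G_\Delta\cdot h_J$ with the $G_F$-stratification observed in Section~3.2 lets closure commute with intersection, so $\overline{\cp_{J,w}} = G_\Delta\cdot h_J\cap\overline{Z_{J,F;w^{-1}}}$; applying Proposition~2.7 and intersecting stratum-by-stratum (reindexing $w' = (w'')^{-1}$) yields the desired disjoint union. The hard part will be the bookkeeping between $W^J$ and ${}^JW$ via inversion, required both in the second sentence of (1) and to match Proposition~2.7's indexing convention.
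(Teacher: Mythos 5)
Your treatment of part (1) is correct but takes a more computational route than the paper for the inclusion $\cp_{J,w}\subset p(\cb_w)$. Your coordinate-matching does work: from $(P,U_P)=(g,F(g))(1,\pi)\cdot h_J$ one gets $P={}^{F(g)\pi}P_J$ and the coset condition $\pi^{-1}L(g)^{-1}\in U_{P_J}$, whence $L(g)\in B\dot wB$ and $F(g)\pi\in gU_{P_J}$, so $P={}^gP_J\in p(\cb_w)$. The paper avoids this computation entirely: it only proves $p(\cb_{w'})\subset\cp_{J,w'}$ for each $w'\in{}^JW$, then uses $3.1(2)$ to see that $p(\cb)=\cup_{w'}p(\cb_{w'})$, and since $p$ is surjective onto $\cp_J=\sqcup_{w'}\cp_{J,w'}$ with the targets disjoint, each inclusion is forced to be an equality. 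Both arguments are valid; yours is self-contained at the level of group elements, the paper's is a pigeonhole argument that recycles the disjointness of the pieces. Your handling of the second sentence of (1) via 1.3(b) and Lemma 2.1(2) is the same content as the paper's appeal to 3.1(1), and the $W^J$ versus ${}^JW$ bookkeeping you flag is genuine but routine (inversion carries $(W_J\cdot w)_{\min}$ for the action $x\cdot y=xy\delta(x)^{-1}$ to $(W_J\cdot w^{-1})_{\min}$ for the action of 1.3, and $w^{-1}\in W^J$ is itself of minimal length in its orbit).

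Part (2) as you propose it has a genuine gap. The step ``transversality lets closure commute with intersection, so $\overline{\cp_{J,w}}=G_\Delta\cdot h_J\cap\overline{Z_{J,F;w^{-1}}}$'' is not justified by anything established in the paper. Section 3.2 only records the pointwise statement $\Lie(G_\Delta)+\Lie(G_F)=\Lie(G)\oplus\Lie(G)$, i.e.\ that the two orbits through each point meet transversally there. Since $A=G_\Delta\cdot h_J$ is closed, the inclusion $\overline{A\cap Z_{J,F;v}}\subset A\cap\overline{Z_{J,F;v}}$ is free, but the reverse inclusion is exactly the nontrivial content of (2): a point of $A\cap(\overline{Z_{J,F;w^{-1}}}\setminus Z_{J,F;w^{-1}})$ lies in some boundary piece $Z_{J,F;v}$, and you must produce points of $A\cap Z_{J,F;w^{-1}}$ arbitrarily close to it. Pointwise tangent-space transversality at points of the open stratum does not give this; you would need the $G_F$-orbit decomposition to be a Whitney (or otherwise equisingular) stratification and then invoke the theorem that a subvariety transversal to all strata satisfies $A\cap\overline S=\overline{A\cap S}$. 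That is a substantial extra input you neither state nor prove.

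The paper sidesteps this entirely and you should too: from $\overline{\cb_w}=\sqcup_{v\le w}\cb_v$ (the Lang isogeny) and properness of $p$ one gets $\overline{\cp_{J,w}}=p(\overline{\cb_w})=\cup_{v\le w}p(\cb_v)$; the upper bound then follows from 3.1(3) (each $g^{-1}F(g)\in\overline{B\dot wB}$ lies in $R_J\cdot w'$ for some $w'$ with $(w')^{-1}\le_{J,\delta}w^{-1}$), and the lower bound follows from the second sentence of part (1): given such a $w'$, choose $v\in(W_J\cdot w')_{\min}$ with $v\le w$ (this is the definition of $\le_{J,\delta}$); then $\cp_{J,w'}=p(\cb_v)\subset p(\overline{\cb_w})$. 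This uses only Proposition 2.7 repackaged as 3.1(3), with no transversality.
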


\begin{rmk}
The closure relation of $\cp_{J, w}$ was conjectured by G. Lusztig in private conversation.
\end{rmk}

\begin{proof} (1) Let $w \in {}^J W$ and $g \in G$ with ${}^g B \in \cb_w$. Then $g \i F(g) \in B w B$. Thus \begin{align*} ({}^g P_J, U_{P_J}) &=(g, g) \cdot h_J=(g, F(g)) (1, F(g) \i g) \cdot h_J \\ & \in G_F (B, B w \i) \cdot h_J=Z_{J, F; w \i} \end{align*} and $p(\cb_w) \subset \cp_{J, w}$.

By 3.1, for any $g \in G$, there exists $l \in L_J$ such that $(g l) \i F(g l) \in B \dot w' B$ for some $w' \in {}^J W$. Hence \begin{align*}
\tag{a} p(\cb) &=\cup_{g \in G} \, p({}^g B)=\cup_{w' \in {}^J W} \cup_{g \i F(g) \in B \dot w' B} \, p({}^g B) \\ &=\cup_{w' \in {}^J W} \, p(\cb_{w'}) \subset \sqcup_{w' \in {}^J W} \cp_{J, w}=\cp_J.
\end{align*}
Since $p$ is proper, we have that $p(\cb)=\cp_J$. Thus the inequality in (a) is actually an equality and $p(\cb_{w'})=\cp_{J, w'}$ for all $w' \in {}^J W$.

If moreover, $v \in (W_J \cdot w)_{\min}$, then by 3.1, there exists $l \in L_J$ such that $(g l) \i F(g l)=l \i g \i F(g) F(l) \in B \dot w B$. Thus $p({}^g B)={}^g P_J={}^{g l} P_J \in p(\cb_w)$ and $p(\cb_v) \subset p(\cb_w)$. Similarly, we have that $p(\cb_w) \subset p(\cb_v)$. Then $p(\cb_v)=p(\cb_w)$.

Part (1) is proved.

(2) Since $p$ is proper, we have that $\overline{\cp_{J, w}}=p(\overline{\cb_w})$. By \ref{8} (a), $\overline{\cb_w}=\sqcup_{v \le w} \cb_{v}$ and $p(\overline{\cb_w})=\cup_{v \le w} p(\cb_v)=\cup_{g \in G, g \i F(g) \in \overline{B \dot v B}} p({}^g B)$. By \ref{9} (3), \begin{align*} p(\overline{\cb_w}) &=\cup_{w' \in {}^J W, (w') \i \le_{J, \d} w \i} \cup_{g \i F(g) \in B \dot w' B} p({}^g B) \\ &=\cup_{w' \in {}^J W, (w') \i \le_{J, \d} w \i} p(\cb_w') \\ &=\cup_{w' \in {}^J W, (w') \i \le_{J, \d} w \i} \cp_{J, w'}. \end{align*} Part (2) is proved.
\end{proof}

Let us discuss some other properties of $\cp_{J, w}$.

\begin{prop}
Assume that $G$ is quasi-simple and $J \neq I$. Then $\cp_{J, w}$ is irreducible if and only if $\supp_{\d}(w)=I$.
\end{prop}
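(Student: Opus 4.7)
\emph{Plan.} The natural strategy is to use Theorem 3.2 to relate $\cp_{J,w}$ to the Deligne--Lusztig variety $\cb_w$ and then invoke the classical irreducibility criterion: $\cb_w$ is irreducible if and only if $w$ lies in no proper $\d$-stable standard parabolic subgroup of $W$, i.e., if and only if $\supp_\d(w)=I$. Since Theorem 3.2(1) gives $\cp_{J,w}=p(\cb_w)$ with $p$ proper, the \emph{if} direction is immediate: $\supp_\d(w)=I$ forces $\cb_w$ to be irreducible, hence its image $\cp_{J,w}$ under the continuous map $p$ is irreducible.

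For the \emph{only if} direction, suppose $K:=\supp_\d(w)\subsetneq I$. Then $K$ is $\d$-stable, and the classical structure theory of Deligne--Lusztig varieties gives a disjoint decomposition
\[
\cb_w=\bigsqcup_{Q\in(\cp_K)^F}\cb_w^Q,\qquad \cb_w^Q=\{B'\in\cb_w\,:\,B'\subset Q\},
\]
where each piece is nonempty and irreducible (it is a Deligne--Lusztig variety inside $L_Q$ for an element of full $\d$-support in $W_K$). Since $G$ is quasi-simple and $K\subsetneq I$, we have $|(\cp_K)^F|>1$.

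The plan is to construct a morphism $\pi:\cp_{J,w}\to(\cp_K)^F$ whose composition with $p|_{\cb_w}$ sends each component $\cb_w^Q$ to $Q$. Since $(\cp_K)^F$ is a finite discrete set, $\pi$ is locally constant, and the preimages $\pi^{-1}(Q)=p(\cb_w^Q)$ are nonempty, closed, pairwise disjoint, and cover $\cp_{J,w}$. Having more than one such piece shows $\cp_{J,w}$ is disconnected, hence (since it is smooth, being the transversal intersection of $G_\D\cdot h_J$ and $Z_{J,F;w^{-1}}$ as observed in subsection 3.2) not irreducible. My proposed definition of $\pi$ is: for $P\in\cp_{J,w}$, choose any $B'\in p^{-1}(P)\cap\cb_w$ and set $\pi(P)=\langle F^n(B')\,:\,n\ge 0\rangle$, the parabolic subgroup of $G$ generated by the Frobenius iterates of $B'$. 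This is manifestly $F$-stable, and one checks it is of type $K$ using that $\pos(F^n(B'),F^{n+1}(B'))=\d^n(w)$, so that the generated parabolic has type $\bigcup_n\supp(\d^n(w))=\supp_\d(w)=K$.

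The main obstacle is the well-definedness of $\pi(P)$, i.e., independence of the auxiliary choice of $B'\in p^{-1}(P)\cap\cb_w$. Concretely, if $B'_1,B'_2\subset P$ both lie in $\cb_w$, one must prove $\langle F^n(B'_1)\,:\,n\ge 0\rangle=\langle F^n(B'_2)\,:\,n\ge 0\rangle$. I expect this to fall out of the combinatorial analysis of the sequence $(P^n,J_n,w_n)$ from Lemma 2.5 associated to $(P,U_P)\in Z_{J,F;w^{-1}}$: this sequence depends only on $P$ (not on any choice of Borel), and combined with the identity $\Ad(w^{-1})I(J,\d;w^{-1})=\d(I(J,\d;w^{-1}))$, it should pin down a canonical $F$-stable parabolic of type $K$ attached to $P$, which must then coincide with both $\langle F^n(B'_i)\,:\,n\ge 0\rangle$. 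Once this is established, $\pi$ is well-defined, $G^F$-equivariant, and clearly surjective (its composition with $p$ already hits every $Q\in(\cp_K)^F$), completing the argument.
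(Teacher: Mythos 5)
Your \emph{if} direction is fine: it rests on the same Bonnaf\'e--Rouquier input as the paper, applied to $\cb_w$ together with Theorem 3.2(1) rather than to the parabolic Deligne--Lusztig variety $\cp_{K,w}$ via Lusztig's isomorphism $\cp_{J,w}\cong\cp_{I(J,\d;w),w}$, and the image of an irreducible variety under $p$ is irreducible. The \emph{only if} direction, however, has a fatal gap at exactly the point you defer: the map $\pi$ is \emph{not} well-defined. Concretely, take $G=SL_4$ split over $\FF_q$ (so $\d=\mathrm{id}$), $J=\{1\}$, $w=s_3\in{}^JW$, so $K=\supp_\d(w)=\{3\}$. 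In flag language, $\cb_{s_3}$ consists of the full flags $(V_1\subset V_2\subset V_3)$ with $V_1,V_2$ rational and $V_3$ not rational, and the unique type-$K$ parabolic containing such a Borel is the stabilizer of $(V_1\subset V_2)$. Now fix a rational plane $V_2=\langle e_1,e_2\rangle$ and a non-rational hyperplane $V_3\supset V_2$, and set $P=\mathrm{Stab}(V_2\subset V_3)\in\cp_J$. Then $B_1'=(\langle e_1\rangle\subset V_2\subset V_3)$ and $B_2'=(\langle e_2\rangle\subset V_2\subset V_3)$ both lie in $\cb_{s_3}\cap p^{-1}(P)$, yet $\langle F^n(B_1')\rangle=\mathrm{Stab}(\langle e_1\rangle\subset V_2)$ and $\langle F^n(B_2')\rangle=\mathrm{Stab}(\langle e_2\rangle\subset V_2)$ are distinct $F$-stable parabolics of type $K$. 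So $p(\cb_w^{Q_1})$ and $p(\cb_w^{Q_2})$ genuinely overlap, your locally constant map to $(\cp_K)^F$ does not exist, and the disconnectedness argument collapses. (The variety $\cp_{\{1\},s_3}$ \emph{is} still reducible, but its components are indexed by the rational planes $V_2$, i.e.\ by a proper quotient of $(\cp_K)^F$ --- the invariant that survives passage to $P$ is not the type-$\supp_\d(w)$ parabolic.)

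This failure is not incidental: the correct combinatorial datum controlling reducibility of $\cp_{J,w}$ is not $\supp_\d(w)$ alone but the $\d$-closure of $\supp(w)\cup I(J,\d;w)$. That is how the paper argues: it first invokes Lusztig's isomorphism $\cp_{J,w}\cong\cp_{K,w}$ with $K=I(J,\d;w)$, then applies the Bonnaf\'e--Rouquier criterion in its parabolic form ($\cp_{K,w}$ is irreducible iff $wW_K\not\subset W_{J'}$ for every proper $\d$-stable $J'\subset I$), and finally proves --- by a root-system argument that is where quasi-simplicity of $G$ is actually used --- that the minimal $\d$-stable $J'$ containing $\supp(w)\cup K$ equals $I$ if and only if $\supp_\d(w)=I$. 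If you want to salvage your geometric picture, you should attach to $P$ the minimal $F$-stable parabolic containing $P$ (of type the $\d$-closure of $J\cup\supp(w)$, essentially), not the type-$\supp_\d(w)$ parabolic attached to a chosen Borel; but as written the key step of your proof is false, not merely unproved.
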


\begin{proof} By \cite[4.2 (d)]{L3}, $\cp_{J, w}$ is isomorphic to $\cp_{K, w}$, where $K=I(J, \d; w)$. By \cite[Theorem 2]{BR}, $\cp_{K, w}$ is irreducible if and only if $w W_K$ is not contained in $W_{J'}$ for any $\d$-stable proper subset $J'$ of $I$.

Let $J'$ be the minimal $\d$-stable subset of $I$ with $w W_K \subset W_{J'}$. It is easy to see that if $\supp_{\d}(w)=I$, then $J'=I$. On the other hand, suppose that $\supp_{\d}(w) \neq I$ and $J'=I$. Then for any $i \in K-\supp_{\d}(w)$, we have that $w \a_i \in \d(K)$. Since $w \a_i \in \a_i+\sum_{j \in \supp(w)} \mathbb Z \a_j$, we must have that $w \a_i=\a_i$ and $i \in \d(K)$. In particular, $K-\supp_{\d}(w)$ is $\d$-stable, $w \a_i=\a_i$ for all $i \in K-\supp_{\d}(w)$ and $K-\supp_{\d}(w)=I-\supp_{\d}(w)$. Since $G$ is quasi-simple, there exists $i \in K-\supp_{\d}(w)$ such that $(\a_i, \a_j^\vee)<0$ for some $j \in \supp(w)$. Now assume that $w=s_{j_1} s_{j_2} \cdots s_{j_m}$ is a reduced expression and $m'=\max\{n; (\a_i, \a_{j_n}^\vee) \neq 0\}$. Then $s_{j_1} s_{j_2} \cdots s_{j_{m'}} \a_i=s_{j_1} s_{j_2} \cdots s_{j_m} \a_i=\a_i$. Thus $$0>(\a_i, \a_{j_{m'}}^\vee)=(s_{j_1} \cdots s_{j_{m'}} \a_i, s_{j_1} \cdots s_{j_{m'}} \a_{j_{m'}}^\vee)=(\a_i, s_{j_1} \cdots s_{j_{m'}} \a_{j_{m'}}^\vee).$$ However, $s_{j_1} \cdots s_{j_{m'}} \a_{j_{m'}}^\vee$ is a negative coroot. Thus $$(\a_i, s_{j_1} \cdots s_{j_{m'}} \a_{j_{m'}}^\vee) \ge 0,$$ which is a contradiction. Therefore if $\supp_{\d}(w) \neq I$, then $J' \neq I$. The proposition is proved.
\end{proof}

\subsection{} By \cite[4.2 (d)]{L3}, $\cp_{J, w}$ is isomorphic to $\cp_{K, w}$, where $K=I(J, \d; w)$. Similar to \cite[1.11]{DL}, we have that
\begin{align*}
\cp_{K, w} &=\{g \in G; g \i F(g) \in P_K \dot w P_K\}/P_K \\ &=\{g \in G; g \i F(g) \in \dot w P_K\}/P_K \cap {}^{\dot w} P_K \\ &=\{g \in G; g \i F(g) \in \dot w U_{P_K}\}/L_K^{\Ad(\dot w) \circ F} (U_{P_K} \cap {}^{\dot w} U_{P_K}).
\end{align*}

Let $P \in \cp_{K, w}$ such that there exists a $F$-stable Levi subgroup $L$ of $P$. Then similar to \cite[1.17]{DL}, we have that
\begin{align*}
\cp_{K, w} &=\{g \in G; g \i F(g) \in P F(P)\}/P \\ &=\{g \in G; g \i F(G) \in F(P)\}/P \cap F(P) \\&=\{g \in G; g \i F(g) \in F(U_P)\}/L^F (U_P \cap F(U_P)).
\end{align*}

\section{An extension of Bruhat decomposition} After the paper was submitted, I learned from A. Vasiu about his conjecture in \cite[2.2.1]{Va}. We state it in the following slightly stronger version.

\begin{cor}
Let $P$ be a parabolic subgroup of $G$ of type $J$ with a Levi subgroup $L$. Let $R=\{(l u, l u'); l \in L, u, u' \in U_P\}$ and define the action of $R$ on $G$ by $(l u, lu') \cdot g=l u g F(l u') \i$. Then 

(1) There are only finitely many $R$-orbits on $G$, indexed by ${}^J W$.

(2) If moreover, there exists a maximal torus $T' \subset P$ such that $F(T')=T'$, then each $R$-orbit contains an element in $N_G(T')$.
\end{cor}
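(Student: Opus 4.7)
For Part (1), my plan is to reduce to the standard case $P=P_J$, $L=L_J$, where the result is 3.1(2). Since all parabolics of type $J$ are $G$-conjugate and the Levi subgroups of $P$ are $U_P$-conjugate to one another, I can find $g_0\in G$ with $P={}^{g_0}P_J$ and $L={}^{g_0}L_J$. The map $\Phi\colon G\to G$, $\Phi(g)=g_0^{-1}gF(g_0)$, is a variety isomorphism, and a direct calculation shows it intertwines the $R$-action on $G$ with the analogous $R_J$-action on $G$. The latter agrees with the $R_J$-action of Section 3.1 up to the involution $(r_1,r_2)\mapsto(r_2,r_1)$ on $R_J$, which preserves the orbit partition. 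Applying 3.1(2) and pulling back along $\Phi$ then indexes the $R$-orbits by ${}^JW$.

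For Part (2), the crucial preliminary observation is that the group $R$ depends only on $P$, not on the choice of Levi $L$. Indeed, if $L'={}^uL$ is another Levi of $P$ with $u\in U_P$, then for $l\in L$ one has $ulu^{-1}=l\cdot(l^{-1}ulu^{-1})$, and $l^{-1}ulu^{-1}$ lies in $U_P$ because $l$ normalizes $U_P$; this gives $R_{L'}\subseteq R_L$, hence equality. Using this, I may replace $L$ by the unique Levi of $P$ containing $T'$ and assume $T'\subset L$ from the start.

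In this reduced situation, I sharpen the choice of $g_0$ from Part (1) to additionally arrange $T'={}^{g_0}T$: the torus $g_0^{-1}T'g_0$ is a maximal torus of $L_J$, hence equals $lTl^{-1}$ for some $l\in L_J$, and replacing $g_0$ by $g_0l$ preserves both $P={}^{g_0}P_J$ and $L={}^{g_0}L_J$. Now $F(T')=T'$ combined with $T'={}^{g_0}T$ forces $g_0^{-1}F(g_0)\in N_G(T)$, and a short computation then yields $\Phi(N_G(T'))=N_G(T)$. Since 3.1(2) places representatives $\dot w\in N_G(T)$ in each $R_J$-orbit, pulling back via $\Phi^{-1}$ produces representatives in $N_G(T')$ for each $R$-orbit. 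The step I expect to require the most care is the Levi-independence of $R$, which is precisely what allows the simultaneous normalization of $P$, $L$, and $T'$ via a single conjugation $g_0$.
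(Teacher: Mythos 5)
Your proposal is correct and follows essentially the same route as the paper: conjugate $(P,L)$ to $(P_J,L_J)$, transport the $R$-action to the $R_J$-action of Section 3.1 via $g\mapsto g_0^{-1}gF(g_0)$, and invoke 3.1(2). Your extra care in part (2) — the observation that $R$ is independent of the choice of Levi, and the adjustment of $g_0$ by an element of $L_J$ so that $T'={}^{g_0}T$ — fills in a reduction the paper leaves implicit, and your note about the swap $(r_1,r_2)\mapsto(r_2,r_1)$ correctly reconciles the two action conventions.
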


\begin{proof}
We may assume that $P={}^g P_J$ and $L={}^g L_J$. For any $w \in {}^J W$, set $w^*=g w F(g) \i$. Then it is to see that $R \cdot w^*=g (R_J \cdot w) F(g) \i$. Now part (1) follows from 3.1 (2).

If moreover, $T'={}^g T \subset P$ is $F$-stable, then we have that $g \i F(g) \in N_G(T)$. Thus $w^*=g w F(g) \i=g (w F(g) \i g) g \i$ and $w  F(g) \i g \in N_G(T)$. So $w^* \in N_G(T')$ and part (2) is proved.
\end{proof}

\section*{Acknowledgements}
We thank G. Lusztig for suggesting the problem and some helpful discussions. We also thank Z. Lin for explaining to me a conjecture by A. Vasiu and some helpful discussions.  After the paper was submitted, I learned from T. A. Springer that he also obtained some similar results about the $G_F$-stable pieces in a different way using the approach in \cite{Sp}. 

\bibliographystyle{amsalpha}

\end{document}